\newif\ifdm
\newif\ifarxiv
\crefname{subsection}{Subsection}{subsections}
\newtheorem{theorem}{Theorem}
\newtheorem{thm}{Theorem}
\newtheorem{lem}[thm]{Lemma}
\newtheorem{prop}[thm]{Proposition}
\theoremstyle{definition}
\newtheorem{defn}[thm]{Definition}
\theoremstyle{remark}
\newtheorem{remark}[thm]{Remark}
\newcommand{\ncmd}{\newcommand}
\definecolor{DefColor}{rgb}{0.6,0.15,0.25}
\ncmd{\mdef}[1]{\textcolor{DefColor}{#1}}
\ncmd{\tdef}[1]{\mdef{\emph{#1}}}
\DeclareRobustCommand{\minwidthbox}[2]{%
  \mathmakebox[\ifdim#2<\width\width\else#2\fi]{#1}%
}
\newcommand{\xrightleftarrows}[2]{%
  \mathrel{\mathop{%
    \vcenter{\offinterlineskip\m@th
      \ialign{\hfil##\hfil\cr
        \hphantom{$\scriptstyle\mspace{8mu}{#1}\mspace{8mu}$}\cr
        \rightarrowfill\cr
        \vrule height0pt width 2em\cr
        \leftarrowfill\cr
        \hphantom{$\scriptstyle\mspace{8mu}{#2}\mspace{8mu}$}\cr
        \noalign{\kern-0.3ex}
      }%
    }%
  }\limits^{#1}_{#2}}%
}
\ncmd{\too}[1][]{\xrightarrow{\minwidthbox{#1}{1em}}}
\ncmd{\Too}[1][]{\xRightarrow{\minwidthbox{#1}{1em}}}
\ncmd{\adj}{\xrightleftarrows{\minwidthbox{}{1em}}{\minwidthbox{}{1em}}}
\ncmd{\iso}{\too[\smash{\raisebox{-0.5ex}{\ensuremath{\scriptstyle\sim}}}]}
\ncmd{\qin}{\quad\in\quad}
\ncmd{\mbb}[1]{\mathbb{#1}}
\ncmd{\mrm}[1]{\mathrm{#1}}
\ncmd{\mcl}[1]{\mathcal{#1}}
\ncmd{\mfk}[1]{\mathfrak{#1}}
\ncmd{\mbf}[1]{\mathbf{#1}}
\ncmd{\todo}[1]{\textbf{TODO #1}}
\ncmd{\reftodo}[1]{\textbf{REF #1}}
\ncmd{\id}[1][]{\mrm{id}\ifstrempty{#1}{}{_{#1}}}
\ncmd{\BB}{\mrm{B}}
\ncmd{\unit}{\mathbf{1}}
\ncmd{\CC}{\mcl{C}}
\ncmd{\DD}{\mcl{D}}
\ncmd{\EE}{\mcl{E}}
\ncmd{\bbE}{\mbb{E}}
\ncmd{\VV}{\mcl{V}}
\ncmd{\WW}{\mcl{W}}
\ncmd{\MM}{\mcl{M}}
\ncmd{\OO}{\mcl{O}}
\ncmd{\tA}{\mbf{A}}
\ncmd{\tB}{\mbf{B}}
\ncmd{\LM}{\mcl{LM}}
\ncmd{\Ass}{\mrm{Assoc}}
\ncmd{\II}{I}
\ncmd{\op}{\mrm{op}}
\ncmd{\co}{{\mrm{2}\text{-}\mrm{op}}}
\ncmd{\rev}{\mrm{rev}}
\ncmd{\atomicpres}{\mrm{at}}
\ncmd{\atomic}[1][]{\ifstrempty{#1}{}{#1\text{-}}\atomicpres}
\ncmd{\pt}{\mrm{pt}}
\ncmd{\Hei}{\mathrm{Heine}}
\ncmd{\lax}{\mathrm{lax}}
\ncmd{\laxV}{{\lax\text{-}\VV}}
\ncmd{\cl}{\mathrm{cl}}
\ncmd{\Idem}{\mrm{Idem}}
\ncmd{\sml}{\mrm{small}}
\ncmd{\Mod}{\mrm{Mod}}
\ncmd{\LMod}{\mrm{LMod}}
\ncmd{\Spaces}{\mcl{S}}
\ncmd{\SPACES}{\widehat{\Spaces}}
\ncmd{\Sp}{\mrm{Sp}}
\ncmd{\Set}{\mrm{Set}}
\ncmd{\Cat}{\mrm{Cat}}
\ncmd{\CatV}{\Cat^\VV}
\ncmd{\tCat}{\mbf{Cat}}
\ncmd{\CAT}{\widehat{\Cat}\vphantom{\Cat}}
\ncmd{\tCAT}{\widehat{\tCat}\vphantom{\tCat}}
\ncmd{\tCAAT}{\widehat{\widehat{\tCat}}\vphantom{\tCat}}
\ncmd{\CATsml}{\CAT_\sml}
\ncmd{\tCATsml}{\tCAT_\sml}
\ncmd{\Catidem}{\Cat_{\mrm{idem}}}
\ncmd{\CatVcpl}{\CatV_{\mrm{cpl}}}
\ncmd{\Catperf}{\Cat_{\mrm{perf}}}
\renewcommand{\Pr}{\mrm{Pr}}
\ncmd{\tPr}{\mbf{Pr}}
\ncmd{\LL}{\mrm{L}}
\ncmd{\RR}{\mrm{R}}
\ncmd{\PrL}{\Pr^\LL}
\ncmd{\PrLL}{\Pr^{\mrm{LL}}}
\ncmd{\PrLLmol}{\Pr^{\mrm{LL},\mrm{mol}}}
\ncmd{\tPrL}{\tPr^\LL}
\ncmd{\tPrLL}{\tPr^{\mrm{LL}}}
\ncmd{\tPrLLmol}{\tPr^{\mrm{LL},\mrm{mol}}}
\ncmd{\PrLV}{\PrL_\VV}
\ncmd{\PrLLV}{\PrLL_\VV}
\ncmd{\PrLLmolV}{\PrLLmol_\VV}
\ncmd{\PrLLmolSp}{\PrLLmol_\Sp}
\ncmd{\PrLstw}{\PrL_{\mrm{st},\omega}}
\ncmd{\Mon}{\mrm{Mon}}
\ncmd{\yon}{\text{\usefont{U}{min}{m}{n}\symbol{'110}}}
\DeclareFontFamily{U}{min}{}
\DeclareFontShape{U}{min}{m}{n}{<-> dmjhira}{}
\ncmd{\cyon}{\yon{}^\mathnormal{c}}
\ncmd{\cwyon}{\widehat{\yon}{}^\mathnormal{c}}
\ncmd{\yonV}{\yon{}^\VV}
\ncmd{\wiota}{\widehat{\iota}}
\DeclareMathOperator{\eval}{eval}
\DeclareMathOperator{\Fun}{Fun}
\DeclareMathOperator{\tFun}{\mbf{Fun}}
\DeclareMathOperator{\tFunt}{\tFun_2}
\DeclareMathOperator{\Nat}{Nat}
\DeclareMathOperator{\Natt}{\Nat_2}
\DeclareMathOperator{\Alg}{Alg}
\DeclareMathOperator*{\colim}{colim}
\DeclareMathOperator*{\oplaxcolim}{oplaxcolim}
\DeclareMathOperator{\PSh}{\mcl{P}}
\DeclareMathOperator{\cPSh}{\mcl{P}^\mathnormal{c}}
\DeclareMathOperator{\cwPSh}{\widehat{\mcl{P}}{}^\mathnormal{c}}
\ncmd{\isml}{i_\sml}
\ncmd\noloc{%
  \nobreak
  \mspace{6mu plus 1mu}
  {:}
  \nonscript\mkern-\thinmuskip
  \mathpunct{}
  \mspace{2mu}
}
\title{Uniqueness and $(\infty,2)$-Naturality of Yoneda}
\author{Shay Ben-Moshe}
\date{}
\begin{document}
  \ifarxiv
	\maketitle
  \fi
	
	\begin{abstract}
    We show that the Yoneda embedding extends to an $(\infty,2)$-natural transformation.
    Furthermore, as such, it is uniquely determined by its value at the trivial $\infty$-category.
    We also study the naturality of the Yoneda lemma in its arguments, showing that it is an isomorphism of $(\infty,2)$-natural transformations.
	\end{abstract}

  \ifdm
  \emsauthor{1}{
    \givenname{Shay}
    \surname{Ben-Moshe}
    \mrid{}
    \orcid{0000-0001-8070-5235}}{S.~Ben-Moshe}
  
  \Emsaffil{1}{
    \department{Einstein Institute of Mathematics}
    \organisation{The Hebrew University of Jerusalem}
    \rorid{03qxff017}
    \address{}
    \zip{91904}
    \city{Jerusalem}
    \country{Israel}
    \affemail{shay.benmoshe@mail.huji.ac.il}}

  \classification[18N65, 18D65]{18N60}
  \keywords{$\infty$-categories, Yoneda embedding, $(\infty,2)$-categories}

  \maketitle

  \fi
	
  \ifarxiv
	\tableofcontents

  \vspace{1em}
  \fi

  \begin{figure}[ht!]
    \centering
    \includegraphics[width=115mm]{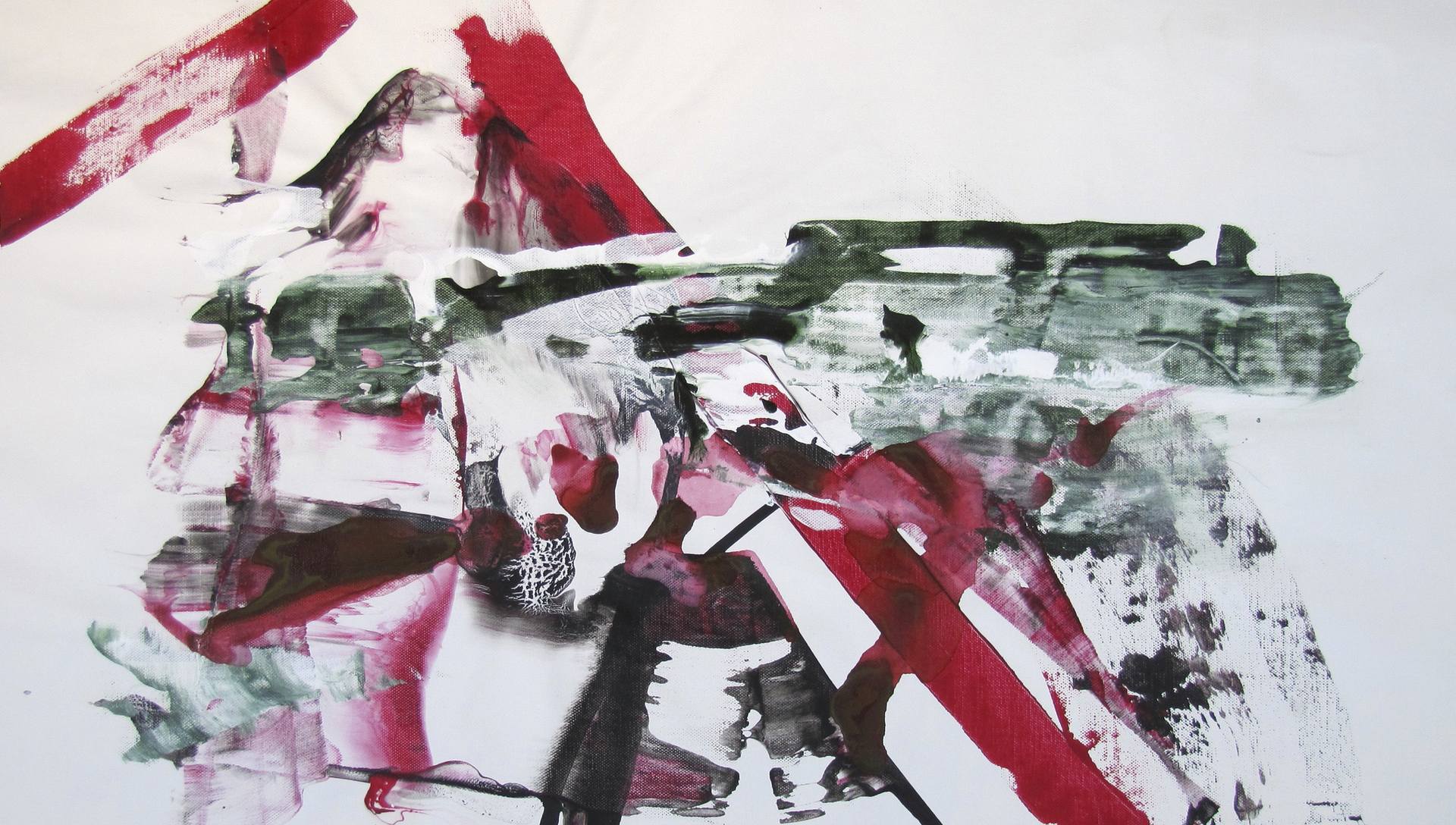}
    \caption*{
      ``\href{https://www.saatchiart.com/art/Painting-THE-RED-BLACK-CONCEPT-2-NATURALIZATION-ORIGINAL-MIXED-MEDIA-ABSTRACT-PAINTING/824166/3101086/view}{The Red/Black Concept 2- Naturalization}''
      by \href{https://www.saatchiart.com/Matkovsky}{Dmitri Matkovsky}.
    }
  \end{figure}

  \ifarxiv
  \pagebreak
  \fi

\section{Introduction}

Throughout the paper, we use the term category to mean an $\infty$-category and the term $2$-category to mean an $(\infty,2)$-category.

Given a small category $\CC$, we denote by $\PSh(\CC) := \Fun(\CC^\op, \Spaces)$ the large category of presheaves, and we denote the Yoneda embedding functor by
\[
    \yon\colon \CC \too \PSh(\CC),
    \qquad X \mapsto \hom(-, X).
\]
This construction is also characterized by a universal property: the Yoneda embedding exhibits $\PSh(\CC)$ as the free cocompletion of $\CC$. That is, for every large category $\EE$ admitting small colimits, pre-composition with Yoneda induces an isomorphism
\[
    \yon^*\colon \Fun_\sml(\PSh(\CC), \EE) \iso \Fun(\CC, \EE),
\]
where the source denotes the category of functors $\PSh(\CC) \to \EE$ preserving small colimits.
As we recall in \cref{sec-yon-emb}, this universal property makes the construction natural in $\CC$.
Namely, let $\iota\colon \Cat \to \CAT$ denote the inclusion of small categories into large categories, then the Yoneda embedding extends to a natural transformation
\[\begin{tikzcd}
	\Cat && {\CAT.}
	\arrow[""{name=0, anchor=center, inner sep=0}, "\iota", curve={height=-12pt}, from=1-1, to=1-3]
	\arrow[""{name=1, anchor=center, inner sep=0}, "\PSh"', curve={height=12pt}, from=1-1, to=1-3]
	\arrow["\yon", shorten <=3pt, shorten >=3pt, Rightarrow, from=0, to=1]
\end{tikzcd}\]


In this short paper we consider $2$-categorical aspects of Yoneda.
Recall that small categories arrange into a $2$-category $\tCat$, and similarly for large categories $\tCAT$, and $\iota$ enhances to a $2$-functor $\iota\colon \tCat \to \tCAT$.
We use the fact that the presheaves functor is symmetric monoidal, together with Heine's work \cite{Hei} on the relationship between enriched categories and tensored categories, to show that the presheaves functor and the Yoneda embedding admit a $2$-categorical refinement.
Furthermore, we use the $2$-categorical Yoneda lemma proven by Hinich \cite{Hin} to show that as a $2$-natural transformation, the Yoneda embedding is uniquely determined by its value at the trivial category.

\begin{theorem}[{\cref{yon-is-2} and \cref{nat-unique}}]\label{yon-emb-eu}
    The Yoneda embedding extends to a $2$-natural transformation
    \[\begin{tikzcd}
        \tCat && {\tCAT.}
        \arrow[""{name=0, anchor=center, inner sep=0}, "\iota", curve={height=-12pt}, from=1-1, to=1-3]
        \arrow[""{name=1, anchor=center, inner sep=0}, "\PSh"', curve={height=12pt}, from=1-1, to=1-3]
        \arrow["\yon", shorten <=3pt, shorten >=3pt, Rightarrow, from=0, to=1]
    \end{tikzcd}\]
    Furthermore, it is the unique $2$-natural transformation $\iota \Rightarrow \PSh$ whose value at the trivial category $\pt \in \tCat$ is the functor $\pt \to \PSh(\pt) \simeq \Spaces$ choosing the point.
\end{theorem}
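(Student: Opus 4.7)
The plan is to handle existence and uniqueness separately, following the two strategies already advertised in the introduction.

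For existence, I would follow the route via Heine. The $1$-categorical presheaves functor $\PSh\colon \Cat \to \CAT$ is symmetric monoidal with respect to the Cartesian structures, and the $1$-categorical Yoneda $\yon\colon \iota \Rightarrow \PSh$ is a transformation of symmetric monoidal functors. Heine's equivalence between $\Cat$-enriched and $\Cat$-tensored categories then transports this symmetric-monoidal datum to the $2$-categorical world, producing the desired $2$-functors $\iota, \PSh\colon \tCat \to \tCAT$ together with a $2$-natural transformation between them whose underlying $1$-transformation recovers the classical Yoneda. This is the step that gives us a $2$-natural enhancement at all.

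For uniqueness, I rely on Hinich's $(\infty,2)$-Yoneda lemma. The essential observation is that $\iota$ is co-represented: because $\pt$ is the unit of the Cartesian $2$-monoidal structure on $\tCat$, there is a $2$-natural equivalence $\iota(\CC) \simeq \tCat(\pt, \CC)$, sending $X \in \CC$ to the functor $\pt \to \CC$ selecting it. Applying the $2$-Yoneda lemma to $\PSh\colon \tCat \to \tCAT$ at the object $\pt$ then yields
\[
    \Natt(\iota, \PSh) \simeq \Natt(\tCat(\pt, -), \PSh) \simeq \PSh(\pt) \simeq \Spaces,
\]
the equivalence being realized by evaluating a $2$-natural transformation $\alpha$ at $\pt$, which gives a functor $\pt \to \Spaces$, i.e., an object of $\Spaces$. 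Since evaluating the Yoneda embedding at $\pt$ gives $\yon_\pt(\pt) = \hom_\pt(-, \pt) \simeq \pt \in \Spaces$, the terminal space, specifying this value pins down the $2$-natural transformation uniquely.

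The main obstacle lies in the existence part: promoting $\PSh$ to a $2$-functor via Heine requires carefully identifying the $\Cat$-tensored structures on $\tCat$ and $\tCAT$ with their $2$-categorical mapping categories, and exhibiting $\yon$ as a morphism of symmetric monoidal functors in a fully coherent way. Once this setup is in place, uniqueness reduces to a short application of $2$-Yoneda, the remaining verifications being the identification $\iota \simeq \tCat(\pt, -)$ and the explicit evaluation of $\yon_\pt$.
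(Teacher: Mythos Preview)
Your uniqueness argument is exactly the paper's: identify $\iota \simeq \hom_{\tCat}(\pt,-)$ and apply Hinich's $2$-categorical Yoneda lemma to get $\Natt(\iota,\PSh)\simeq \PSh(\pt)\simeq \Spaces$, with the equivalence given by evaluation at $\pt$.

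In the existence part, however, your setup contains a genuine error. The functor $\PSh\colon \Cat \to \CAT$ is \emph{not} symmetric monoidal for the Cartesian structures: already at $\CC=\DD=\pt$ one has $\PSh(\pt\times\pt)\simeq \Spaces$ while $\PSh(\pt)\times\PSh(\pt)\simeq \Spaces\times\Spaces$. What is true, and what the paper actually uses, is that $\PSh$ is symmetric monoidal as a functor $\CAT \to \CATsml$, where the target carries the Lurie tensor product; moreover this $\PSh$ is the left adjoint in a symmetric monoidal adjunction $\PSh \dashv \isml$ whose unit is Yoneda. The paper then applies Heine not to a natural transformation but to this adjunction: the symmetric monoidal left adjoint makes $\CATsml$ a closed $\CAT$-tensored category with $\PSh$ a $\CAT$-linear functor, so Heine upgrades the entire adjunction to an adjunction of $2$-categories $\PSh\colon \tCAT \rightleftarrows \tCATsml \noloc \isml$. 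The unit of a $2$-adjunction is automatically a $2$-natural transformation, and restricting along $\iota\colon \tCat \to \tCAT$ gives the desired $\yon\colon \iota \Rightarrow \isml\circ\PSh\circ\iota$.

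Routing through the adjunction, rather than trying to transport $\yon$ directly, also sidesteps the extra input your approach would need: that Heine's passage from tensored to enriched is $2$-functorial enough to carry $\CAT$-linear natural transformations to $2$-natural transformations.
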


One of the main features of the Yoneda embedding is the Yoneda lemma, typically phrased as follows.
Let $\CC$ be a small category and let $X \in \CC$, then, there is an isomorphism
\[
    \hom(\yon(X), F) \iso F(X)
\]
natural in presheaves $F\colon \CC^\op \to \Spaces$.

In \cref{coyoneda}, we show that the ($1$-)naturality of the Yoneda embedding implies the Yoneda lemma.
To connect the two, note that $\hom(\yon(X), -)$ is itself the Yoneda embedding of $\yon(X)$ as an object of $\PSh(\CC)^\op$.
To streamline the arguments, it will be convenient to work exclusively with copresheaves and the coYoneda embedding, namely replace $\CC$ by $\CC^\op$.
We denote the category of copresheaves by $\cPSh(\CC) := \Fun(\CC, \Spaces)$ and the coYoneda embedding by
\[
    \cyon\colon \CC \too \cPSh(\CC)^\op,
    \qquad X \mapsto \hom(X, -).
\]
Similarly, for a large category $\DD$, we can consider the huge category of large copresheaves $\cwPSh(\DD) := \Fun(\DD, \SPACES)$, together with its coYoneda embedding.
Applying this to $\DD = \cPSh(\CC)$, we obtain
\[
    \cwyon\colon \cPSh(\CC) \too \cwPSh(\cPSh(\CC))^\op.
\]
In these terms, the Yoneda lemma can be rephrased as an isomorphism
\[
    \cwyon(\cyon(X)) \simeq X^*,
\]
and we show that this follows from the naturality of the Yoneda embedding applied to the map $X\colon \pt \to \CC$.

As an application of the $2$-naturality of the Yoneda embedding from \cref{yon-emb-eu}, we make the Yoneda lemma natural in the other arguments.
The $2$-naturality allows us to vary the map $X\colon \pt \to \CC$, making the isomorphism natural in $X \in \CC$.
Further, it allows us to vary $\CC$, making the isomorphism natural in the category itself, namely, establishing an isomorphism between the double coYoneda embedding and the evaluation as $2$-natural transformations.
In fact, the same uniqueness argument from \cref{yon-emb-eu} applies in this case to show that there is a unique such $2$-natural transformation, which gives an alternative proof.
For the sake of completeness, we give both proofs in the body of the paper.

\begin{theorem}[{\cref{yon-lem-2} and \cref{nat-unique}}]\label{yon-lem-eu}
    The double coYoneda embedding and the evaluation are isomorphic as $2$-natural transformations filling the diagram
    \[\begin{tikzcd}
        \tCat && {\tCAAT.}
        \arrow[""{name=0, anchor=center, inner sep=0}, "{\wiota \circ \iota}", curve={height=-12pt}, from=1-1, to=1-3]
        \arrow[""{name=1, anchor=center, inner sep=0}, "\cwPSh\circ\cPSh"', curve={height=12pt}, from=1-1, to=1-3]
        \arrow[shorten <=3pt, shorten >=3pt, Rightarrow, from=0, to=1]
    \end{tikzcd}\]
    Furthermore, they are the unique $2$-natural transformation $\wiota \circ \iota \Rightarrow \cwPSh\circ\PSh$ whose value at the trivial category $\pt \in \tCat$ is the functor $\pt \to \cwPSh(\cPSh(\pt)) \simeq \Fun(\Spaces, \SPACES)$ choosing the inclusion of small spaces into large spaces.
\end{theorem}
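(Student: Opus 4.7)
My plan is to give two proofs, as the author announces, each exploiting \cref{yon-emb-eu} in a different way. The cleanest approach goes through uniqueness: I would invoke \cref{nat-unique}, reducing the statement to verifying that both $\cwyon \circ \cyon$ and the evaluation are (i) $2$-natural transformations $\wiota \circ \iota \Rightarrow \cwPSh \circ \cPSh$, and (ii) agree at $\pt$, taking the value of the inclusion $\Spaces \hookrightarrow \SPACES$. For (i), $2$-naturality of $\cyon$ should follow from \cref{yon-is-2} applied to small categories (after appropriate op-twisting, since $\cPSh := \Fun(-,\Spaces)$), composed with $2$-naturality of $\cwyon$ obtained from the same result applied at the level of large categories; the evaluation is manifestly $2$-natural via the universal property of functor categories. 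For (ii), at $\CC = \pt$ one computes that $\cyon(\pt) \in \cPSh(\pt) \simeq \Spaces$ is the point, so $\cwyon(\cyon(\pt)) = \hom_\Spaces(\pt, -) \colon \Spaces \to \SPACES$ is the inclusion; on the other side, the evaluation at the unique object of $\pt$ is $\id_\Spaces$, which becomes the inclusion $\Spaces \hookrightarrow \SPACES$ once embedded into $\cwPSh(\cPSh(\pt))$.

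For the alternative direct proof, I would follow the outline in the discussion just after \cref{yon-emb-eu}. Pointwise, for each $X \colon \pt \to \CC$, the $1$-naturality of $\yon$ applied to $X$ already gives the classical Yoneda lemma isomorphism $\cwyon(\cyon(X)) \simeq X^*$; this is the content of \cref{coyoneda}. The upgrade to $2$-naturality from \cref{yon-is-2} lets us vary $X$ coherently: morphisms $X \to Y$ in $\CC$ promote these pointwise isomorphisms into a natural isomorphism of functors $\CC \to \cwPSh(\cPSh(\CC))$. Finally, varying $\CC$ via the outer $2$-natural structure of $\yon$ assembles these pointwise-in-$\CC$ isomorphisms into a modification, i.e., an isomorphism between the two $2$-natural transformations in question.

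The main obstacle will be the careful bookkeeping of variances and size. Since $\cPSh$ uses covariant functors, $\cyon \colon \CC \to \cPSh(\CC)^\op$ lands in an opposite category, and $\cwyon$ introduces a further duality at the next size level. Transferring the $2$-naturality from \cref{yon-is-2} (stated for $\yon$ and $\PSh$) to its coYoneda counterpart requires applying the $\op$ $2$-functor and tracking how the $2$-natural data behave under this involution. Likewise, one must verify that the evaluation, interpreted as a $2$-natural transformation $\wiota\circ\iota \Rightarrow \cwPSh\circ\cPSh$, literally matches the composite $\cwyon \circ \cyon$ at $\pt$ under the equivalence $\cwPSh(\cPSh(\pt)) \simeq \Fun(\Spaces, \SPACES)$. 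Once these identifications are in place, either approach concludes cleanly.
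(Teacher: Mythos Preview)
Your two approaches are essentially the two proofs the paper gives, and the uniqueness argument matches the paper's \cref{nat-unique} exactly: $\iota$ is corepresented by $\pt$, so the $2$-categorical Yoneda lemma gives $\Natt(\iota,F)\simeq F(\pt)$, and one only has to check the value at $\pt$, which you compute correctly.

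Your direct proof, however, diverges from the paper's \cref{yon-lem-2}. You propose a bottom-up assembly: start from the pointwise Yoneda isomorphism of \cref{coyoneda}, then use the $2$-naturality of $\yon$ to vary $X$ inside $\CC$, and finally vary $\CC$ to obtain a modification. The paper instead gives a one-step structural argument. The key is that $\eval$ is \emph{defined} in the paper as the composite
\[
\wiota(\iota(-))\simeq\Fun(\pt,-)\xrightarrow{\ \cwPSh\circ\cPSh\ }\Fun(\cwPSh(\cPSh(\pt)),\cwPSh(\cPSh(-)))\xrightarrow{\ (\cwyon\circ\cyon)^*\ }\cwPSh(\cPSh(-)),
\]
i.e.\ it is built from the double coYoneda embedding at $\pt$. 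Given this definition, the $2$-naturality square of the composite $\cwyon\circ\cyon$ (applied to $\hom(\pt,-)$) is literally a square whose top edge is $\cwyon\circ\cyon$ and whose bottom edge is $\eval$; commutativity of that square \emph{is} the desired isomorphism. Your assembly strategy would work but requires more bookkeeping to justify coherence; the paper's route sidesteps this by choosing the definition of $\eval$ so that the comparison becomes a single naturality square.
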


\subsection*{Relation to Other Work}\label{psh-functorialities}

There is a different question regarding the naturality of the Yoneda embedding which was recently addressed in the literature.
The construction of the presheaves ($1$-)functor discussed in this paper is based on its universal property as the free cocompletion.
Yet, there is an alternative candidate for the presheaves functor.
Consider the functor $\Fun((-)^\op, \Spaces)$, sending a morphism $F\colon \CC \to \DD$ to
\[
    F^*\colon \Fun(\DD^\op, \Spaces) \to \Fun(\CC^\op, \Spaces)
\]
given by pre-composition with $F^\op$.
The functor $F^*$ admits a left adjoint $F_!$, given by left Kan extension along $F^\op$.
Passing to the left adjoint functoriality of $\Fun((-)^\op, \Spaces)$ thus gives rise to a functor from small categories to large categories.
One can show that for any $F$ as above, $F_!$ agrees with the value of $\PSh$ at $F$, namely, that the two constructions of the presheaves functor agree on morphisms.
However, since we are working in higher category theory, it does not immediately follow that the two constructions agree coherently, or that the Yoneda embedding is even natural with respect to this other construction.
These questions were recently raised and answered in the affirmative by Haugseng--Hebestreit--Linskens--Nuiten \cite{HHLN}, and were subsequently resolved by Ramzi \cite{Ram} by different means.

The present paper leaves open analogous questions in the $2$-categorical setting.
Our enhancement of $\PSh$ to a $2$-functor relies on its symmetric monoidal structure, stemming from its role as the free cocompletion.
Alternatively, one can try to enhance $\Fun((-)^\op, \Spaces)$ into a $2$-functor, and as above, pass to the left adjoint functoriality.
It is not clear to us that the two enhancements would agree, or that the Yoneda embedding would be $2$-natural for such an enhancement.

In a previous paper \cite{EnrNat}, we have addressed the question of the naturality of the enriched Yoneda embedding.
Similarly to the present paper, this was established using the universal property of enriched presheaves as the free cocompletion.
The question of the comparison to the other construction of enriched presheaves, originally raised in \cite[Question 1.5]{EnrNat}, was recently answered by Heine \cite[Theorem 1.5 and Corollary 1.6]{BiEnr}.
The question of the $2$-naturality in the enriched setting from \cite[Question 1.7]{EnrNat} remains open.

\ifarxiv
\subsection*{Acknowledgements}

I would like to thank Emmanuel Farjoun for his encouragement to write this paper, and for helpful comments on an early draft.
I also thank Lior Yanovski and Shai Keidar for helpful conversations.
I thank the anonymous referee for carefully reading the manuscript and for their helpful comments.
I also thank Vesna Stojanoska for her suggestions throughout the review process.
Finally, I thank Dmitri Matkovsky for allowing me to use his painting as the cover image.

\fi

\section{The Yoneda Embedding}\label{sec-yon-emb}

In this section we show that the Yoneda embedding extends to a $2$-natural transformation.
We begin by collecting some facts about presheaves and Yoneda.
Although the following way to handle size issues when trying to view presheaves and the Yoneda embedding as part of an adjunction appears in \cite{HTT} and is known to experts, it is not widely familiar.
Therefore, we include a brief discussion for the reader's convenience.

In \cite[\S5.3.6]{HTT}, Lurie studies the operation of adjoining colimits to categories.
Given a \emph{large} category $\CC$, there is another large category $\PSh(\CC)$, together with a functor $\yon\colon \CC \to \PSh(\CC)$, exhibiting $\PSh(\CC)$ as given by freely adjoining all small colimits to $\CC$.
We warn the reader that for a general large category, $\PSh(\CC)$ is \emph{not} a category of presheaves on $\CC$.
Rather, it is the full subcategory of the category of large presheaves $\Fun(\CC^\op, \SPACES)$, given by the closure of representables under \emph{small} colimits.
However, when $\CC$ is small, we see that it is the category of small presheaves $\Fun(\CC^\op, \Spaces)$.

The universal property of this construction can be phrased as follows.
Consider $\CATsml$, the category of large categories admitting small colimits and functors preserving them.
Then, the inclusion $\isml\colon \CATsml \to \CAT$ admits a left adjoint given point-wise by $\PSh(\CC)$ with unit given point-wise by $\yon$.

In \cite[\S4.8.1]{HA}, Lurie studies the tensor product of categories, and in particular shows that the adjunction is symmetric monoidal, namely that $\PSh$ is symmetric monoidal, $\isml$ is lax symmetric monoidal, and the Yoneda embedding is a map of lax symmetric monoidal functors.

We summarize these results in the following proposition.

\begin{prop}\label{PShsmall-sm-adj}
    There is a symmetric monoidal adjunction
    \[
        \PSh\colon \CAT \adj \CATsml\colon \isml
    \]
    such that for a small category $\CC$ we have $\PSh(\CC) \simeq \Fun(\CC^\op, \Spaces)$ and the unit map is given by the Yoneda embedding.
\end{prop}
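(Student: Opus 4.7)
The plan is to assemble the statement directly from two results in Lurie's work cited in the surrounding discussion: the construction of free small-cocompletions in HTT \S5.3.6 yields the adjunction, and the tensor product material in HA \S4.8.1 upgrades it to a symmetric monoidal one. No genuinely new ideas are required; the whole task is to match setups and be careful with size.

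First I would invoke HTT \S5.3.6, which produces, for every large category $\CC$, a large category $\PSh(\CC)$ together with a functor $\yon\colon \CC \to \PSh(\CC)$ exhibiting the target as the free small-cocompletion of the source. Packaging this universal property as $\CC$ varies yields a left adjoint to $\isml\colon \CATsml \to \CAT$, whose value at $\CC$ is $\PSh(\CC)$ and whose unit is the Yoneda embedding. When $\CC$ is small, the free small-cocompletion is identified with the small-colimit closure of representables inside $\Fun(\CC^\op, \SPACES)$, which by the standard density theorem (every presheaf of small spaces on a small category is a small colimit of representables) coincides with $\Fun(\CC^\op, \Spaces)$.

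Second, I would promote the adjunction to a symmetric monoidal one via HA \S4.8.1. There the Lurie tensor product makes $\CATsml$ into a symmetric monoidal category characterised by the universal property that a functor out of $\CC \otimes \DD$ is the same as a bifunctor on $\CC \times \DD$ preserving small colimits in each variable, and Lurie shows that the left adjoint to $\isml$ is then symmetric monoidal. By the general principle that the unit of a symmetric monoidal adjunction is a morphism of lax symmetric monoidal functors, the Yoneda embedding inherits the required compatibility for free, and $\isml$ automatically becomes lax symmetric monoidal.

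The only real obstacle is size bookkeeping: for a general large $\CC$, the notation $\PSh(\CC)$ must mean the small-colimit closure of representables inside the huge category $\Fun(\CC^\op, \SPACES)$, rather than the naive functor category, and one has to check that the cited references truly handle this distinction as stated. The preamble to the proposition already flags this point, so the verification amounts to carefully reading off the cited statements in the intended size regime.
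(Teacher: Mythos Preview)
Your proposal is correct and follows essentially the same approach as the paper: the adjunction and the identification for small $\CC$ are taken from \cite[\S5.3.6]{HTT} (specifically Corollary 5.3.6.10 and Example 5.3.6.4), and the symmetric monoidality of the left adjoint from \cite[\S4.8.1]{HA} (Proposition 4.8.1.3 and Remark 4.8.1.8). Your additional remarks on size bookkeeping and the density argument are accurate elaborations but not needed beyond what the cited references already contain.
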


\begin{proof}
    The fact that $\PSh$ is left adjoint to $\isml$ is \cite[Corollary 5.3.6.10]{HTT}.
    The description of the value when $\CC$ is small is \cite[Example 5.3.6.4]{HTT}.
    The fact that the left adjoint $\PSh$ is symmetric monoidal is a consequence of \cite[Proposition 4.8.1.3]{HA} (see also \cite[Remark 4.8.1.8]{HA}).
\end{proof}

We now use this to pass to the $2$-categorical setting.
Recall that there are various models for $2$-categories, which we choose to model by categories enriched in categories in the sense of Gepner--Haugseng \cite{GH} or, equivalently, Hinich \cite{Hin} (see \cite[\S2]{haugseng2021lax} for a recent discussion of models of $2$-categories).

One source of enriched categories is tensored categories, as was initially shown in \cite[Corollary 7.4.9]{GH}, and was vastly expanded by Heine \cite{Hei}.
We give a brief overview for the convenience of the reader.
Fix a presentably monoidal category $\VV \in \Alg(\PrL)$.
Let $\CC$ be a category tensored over $\VV$.
Assume that the action is closed, namely, that for every $X \in \CC$ the functor $- \otimes X\colon \VV \to \CC$ admits a right adjoint, denoted $\hom^\VV(X, -)\colon \CC \to \VV$.
Then, these right adjoints assemble into the structure of a $\VV$-enriched category on $\CC$.
Furthermore, if $\CC$ and $\DD$ are closed $\VV$-tensored categories, then a lax $\VV$-linear functor $F\colon \CC \to \DD$ induces a $\VV$-enriched functor between the corresponding $\VV$-enriched categories.
One can then observe that if $F$ is (strong) $\VV$-linear and a left adjoint, its right adjoint is lax $\VV$-linear, and thus the adjunction induces an adjunction of $\VV$-enriched categories.
Using (the large version of) this observation applied to $\VV = \CAT$, we can make the passage to $2$-categories.

\begin{prop}\label{PShsmall-2-adj}
    The adjunction from \cref{PShsmall-sm-adj} enhances to an adjunction in the $2$-category of $2$-categories
    \[
        \PSh\colon \tCAT \adj \tCATsml\colon \isml.
    \]
\end{prop}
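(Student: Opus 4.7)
The plan is to apply the large version of Heine's theorem described in the paragraph immediately preceding the statement to the symmetric monoidal adjunction from \cref{PShsmall-sm-adj}, using $\VV = \CAT$ (itself viewed as a very large monoidal category under the cartesian product).

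First I would verify the inputs to Heine's machine. The category $\CAT$ is cartesian closed, so it is canonically tensored over itself in a closed way. The full subcategory $\CATsml \subseteq \CAT$ is stable under the $\CAT$-action: for $\DD \in \CAT$ and $\CC \in \CATsml$, the cartesian product $\DD \times \CC$ admits small colimits (computed levelwise), and the internal hom $\Fun(\DD, \CC)$ likewise admits small colimits and hence supplies the required closed structure. Thus both $\CAT$ and $\CATsml$ are closed $\CAT$-tensored categories, and the inclusion $\isml$ is compatible with these structures in the lax sense.

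Next I would exploit the fact that the symmetric monoidal structure on the adjunction from \cref{PShsmall-sm-adj} makes $\PSh$ a strong $\CAT$-linear left adjoint and $\isml$ a lax $\CAT$-linear right adjoint. According to Heine's correspondence, as recalled just before the statement, these closed $\CAT$-tensored categories give rise to $\CAT$-enriched categories, i.e.\ $2$-categories; by construction these recover $\tCAT$ and $\tCATsml$. The lax $\CAT$-linear functor $\isml$ then promotes to a $2$-functor $\isml\colon \tCATsml \to \tCAT$, and similarly (since strong implies lax) $\PSh$ promotes to a $2$-functor in the other direction. Finally, since $\PSh$ is strong $\CAT$-linear and a left adjoint whose right adjoint is lax $\CAT$-linear, the adjunction $\PSh \dashv \isml$ lifts to an adjunction in the $2$-category of $\CAT$-enriched categories, which is exactly the desired $2$-adjunction.

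The main point to check carefully is the size bookkeeping: $\CAT$ is not presentably monoidal in the usual sense, so one must invoke the \emph{large} version of Heine's result, working in the ``very large'' universe so that $\CAT$ and $\CATsml$ become objects of a presentable-like $2$-category of $\tCAT$-enriched categories. Once the ambient size is correctly chosen, all the hypotheses of Heine's theorem apply verbatim and the argument goes through; this is indicated by the parenthetical ``(the large version of)'' remark in the preceding paragraph. Apart from this, the proof is a direct unwinding: the symmetric monoidal adjunction of \cref{PShsmall-sm-adj} is precisely the structure Heine converts into a $2$-adjunction.
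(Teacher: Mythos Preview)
Your overall strategy---invoke Heine's theorem with $\VV = \CAT$---matches the paper, but there is a genuine gap in how you equip $\CATsml$ with a closed $\CAT$-tensoring. You assert that for $\DD \in \CAT$ and $\CC \in \CATsml$ the cartesian product $\DD \times \CC$ admits small colimits; this is false. Colimits in a product are computed componentwise, so $\DD \times \CC$ has small colimits only if \emph{both} factors do. For instance, if $\DD$ is the discrete category on two objects then $\DD \times \CC \simeq \CC \sqcup \CC$ has no initial object. Thus the cartesian action of $\CAT$ does not restrict to $\CATsml$, and your proposed module structure does not exist.

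The paper sidesteps this by using the symmetric monoidal structure on $\PSh$ from \cref{PShsmall-sm-adj} rather than the cartesian product directly. Since $\PSh\colon \CAT \to \CATsml$ is symmetric monoidal, the commutative algebra $\CATsml$ (under its own tensor $\otimes$) becomes a $\CAT$-module by restriction along $\PSh$, with action $\DD \cdot \CC := \PSh(\DD) \otimes \CC$. This action is closed because the monoidal structure on $\CATsml$ is closed and $\PSh$ admits a right adjoint. With this module structure in hand, $\PSh$ is strongly $\CAT$-linear by construction, its right adjoint $\isml$ is then lax $\CAT$-linear, and the remainder of your outline (Heine's theorem, size bookkeeping) applies verbatim.
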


\begin{proof}
    The symmetric monoidal structure on $\PSh$ from \cref{PShsmall-sm-adj} makes $\CATsml$ into a $\CAT$-tensored category (since a commutative algebra is in particular a module), for which $\PSh$ is $\CAT$-linear.
    Furthermore, since the symmetric monoidal structure on $\CATsml$ is closed, and $\PSh$ admits a right adjoint, the action of $\CAT$ on $\CATsml$ is closed.
    As explained above, \cite[Theorem 1.2]{Hei} (see also \cite[Theorem 1.8]{Hei-arxiv}\footnote{Note that the $2$-categorical enhancement of $\chi$ does not appear in the published version of Heine's paper \cite{Hei}, but does appear in a newer arXiv version \cite{Hei-arxiv}.} and \cite[Corollary 4.14]{EnrNat} for a more elaborate discussion) implies that we get an induced adjunction of $\CAT$-enriched categories, i.e.\ of $2$-categories.
\end{proof}

This readily implies the existence of a $2$-categorical lift of the Yoneda embedding.

\begin{prop}\label{yon-is-2}
    The Yoneda embedding extends to a $2$-natural transformation
    \[\begin{tikzcd}
        \tCat && {\tCAT.}
        \arrow[""{name=0, anchor=center, inner sep=0}, "\iota", curve={height=-12pt}, from=1-1, to=1-3]
        \arrow[""{name=1, anchor=center, inner sep=0}, "\PSh"', curve={height=12pt}, from=1-1, to=1-3]
        \arrow["\yon", shorten <=3pt, shorten >=3pt, Rightarrow, from=0, to=1]
    \end{tikzcd}\]
\end{prop}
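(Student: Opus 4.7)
The plan is to obtain the desired $2$-natural transformation as the unit of the $2$-categorical adjunction provided by \cref{PShsmall-2-adj}, then restrict along the inclusion of small categories.

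First, I would recall that any adjunction internal to a $2$-category comes equipped with a unit $2$-cell. Applied to the adjunction $\PSh \dashv \isml$ of \cref{PShsmall-2-adj}, which lives in the $2$-category of $2$-categories, this yields a $2$-natural transformation
\[
    \eta\colon \id_{\tCAT} \Too \isml \circ \PSh.
\]
Whiskering $\eta$ with the $2$-functor $\iota\colon \tCat \to \tCAT$ produces a $2$-natural transformation
\[
    \eta \iota\colon \iota \Too \isml \circ \PSh \circ \iota
\]
of $2$-functors $\tCat \to \tCAT$.

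Next, I would identify $\isml \circ \PSh \circ \iota$ with the $2$-functor labelled $\PSh$ in the statement. By \cref{PShsmall-sm-adj}, on a small category $\CC$ this composite sends $\CC$ to $\Fun(\CC^\op, \Spaces)$, which is the intended target; and since \cref{PShsmall-2-adj} is stated to be an enhancement of \cref{PShsmall-sm-adj}, the component of $\eta$ at $\iota(\CC)$ is the Yoneda embedding $\yon\colon \CC \to \PSh(\CC)$. Thus $\eta\iota$ provides the desired $2$-natural enhancement of Yoneda.

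The only subtle point, and the place where care is required, is the compatibility of the $2$-categorical enhancement with the underlying $1$-categorical data: one must know that passing through Heine's theorem to build the $2$-adjunction in \cref{PShsmall-2-adj} does not alter the underlying $1$-functors or the unit. This is built into the statement of \cref{PShsmall-2-adj} (which is why the $1$-categorical adjunction in \cref{PShsmall-sm-adj} is named identically and referenced as being enhanced rather than replaced), so no additional work should be needed here beyond invoking it.
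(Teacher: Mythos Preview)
Your proposal is correct and follows essentially the same approach as the paper: take the unit of the $2$-adjunction from \cref{PShsmall-2-adj} and whisker with $\iota\colon \tCat \to \tCAT$. The paper's proof is simply a terser version of what you wrote, and your additional remarks on compatibility with the underlying $1$-categorical data are a reasonable elaboration of what the paper leaves implicit.
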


\begin{proof}
    The unit map of the adjunction in \cref{PShsmall-2-adj} provides us with a $2$-natural transformation which we pre-compose with $\iota$ as follows
    \[\begin{tikzcd}
        \tCat & \tCAT && {\tCAT.} \\
        && \tCATsml
        \arrow[""{name=0, anchor=center, inner sep=0}, "{\id[\tCAT]}", from=1-2, to=1-4]
        \arrow["\iota", from=1-1, to=1-2]
        \arrow["\PSh"', curve={height=12pt}, from=1-2, to=2-3]
        \arrow["\isml"', curve={height=12pt}, from=2-3, to=1-4]
        \arrow["\yon", shorten <=3pt, shorten >=3pt, Rightarrow, from=0, to=2-3]
    \end{tikzcd}\]
\end{proof}

\section{The Yoneda Lemma}

In this section we show that the naturality of the Yoneda embedding implies the Yoneda lemma.
As was mentioned in the introduction, in the context of the Yoneda lemma, it is more convenient to work consistently with copresheaves.
We note that the copresheaves functor has a contravariant $2$-functoriality.
Indeed, recall that the construction $\CC \mapsto \CC^\op$ is a $2$-equivalence
\[
    (-)^\op\colon \tCat \iso \tCat^\co,
\]
and we give the following definition.

\begin{defn}
    We define the copresheaves $2$-functor to be
    \[
        \cPSh\colon \tCat \too[(-)^\op] \tCat^\co \too[\PSh] \tCAT^\co,
    \]
    equipped with the coYoneda embedding $2$-natural transformation
    \[
        \cyon\colon \iota \Too \cPSh(-)^\op.
    \]
\end{defn}

Recall that by \cite[8.1]{HHLN} or \cite{Ram}, the underlying ($1$-)functor of $\cPSh$ is given by passing to the left adjoint functoriality of $\Fun(-, \Spaces)$.
Namely, it sends a small category $\CC$ to the large category $\Fun(\CC, \Spaces)$, and $F\colon \CC \to \DD$ to the left Kan extension $F_!\colon \Fun(\CC, \Spaces) \to \Fun(\DD, \Spaces)$, whose right adjoint is the pre-composition $F^*$.

Consider the functors $F_!$ and $F^*$.
Applying the large version of copresheaves, we obtain two functors $F_{!!}$ and $F^{**}$, both from $\cwPSh(\cPSh(\CC))$ to $\cwPSh(\cPSh(\DD))$.
We begin by showing that they coincide.

\begin{lem}\label{!!-**}
    Let $F\colon \CC \to \DD$ be a functor, then there is a natural isomorphism $F_{!!} \simeq F^{**}$ of functors $\cwPSh(\cPSh(\CC)) \to \cwPSh(\cPSh(\DD))$.
\end{lem}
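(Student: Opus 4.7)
The plan is to reduce the lemma to a formal statement about adjunctions in $\CAT$. The $1$-functor $\cPSh$ sends $F$ to the left Kan extension $F_!$, which is by construction the left adjoint of the restriction functor $F^*$. Thus there is an adjunction $F_!\dashv F^*$ between $\cPSh(\CC)$ and $\cPSh(\DD)$.

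Granted this, the lemma becomes an instance of the following general principle: for any adjunction $L\colon \mcl{A} \adj \mcl{B} \noloc R$, the left Kan extension $L_!$ and the pre-composition $R^*$ are canonically isomorphic as functors $\Fun(\mcl{A}, \SPACES) \to \Fun(\mcl{B}, \SPACES)$. Applied with $(L, R) = (F_!, F^*)$, this gives $F_{!!} = (F_!)_! \simeq (F^*)^* = F^{**}$, with the correct source and target since $F_!\colon \cPSh(\CC) \to \cPSh(\DD)$ and $F^*\colon \cPSh(\DD) \to \cPSh(\CC)$ go in opposite directions.

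For the general principle, I would argue by uniqueness of adjoints. By definition $L_! \dashv L^*$. Furthermore, the unit $\id \to RL$ and counit $LR \to \id$ of $L\dashv R$ pre-compose to natural transformations $\id \to L^*R^*$ and $R^*L^* \to \id$ that witness an adjunction $R^* \dashv L^*$. Since both $L_!$ and $R^*$ are left adjoint to the same functor $L^*$, they are canonically isomorphic. (For a hands-on verification one could instead use the pointwise Kan extension formula: $(L_! G)(b)$ is the colimit of $G$ over the comma category $\mcl{A} \times_\mcl{B} \mcl{B}_{/b}$, which the adjunction equivalence $\mrm{Map}(L(a), b) \simeq \mrm{Map}(a, R(b))$ identifies with $\mcl{A}_{/R(b)}$; this has $\id_{R(b)}$ as a terminal object, so the colimit is $G(R(b)) = (R^*G)(b)$, naturally in $G$ and $b$.)

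I do not foresee a serious obstacle, as the argument is purely formal. The only point requiring mild care is that everything takes place within the appropriate size context, which is already part of the paper's setup with $\SPACES$ and large/huge categories, together with the identification of the $1$-functoriality of $\cwPSh$ with passage to left adjoints that the paper has just recalled from \cite{HHLN} and \cite{Ram}.
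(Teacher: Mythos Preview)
Your argument is correct and follows essentially the same route as the paper: start from the adjunction $F_! \dashv F^*$ and conclude by uniqueness of adjoints, with the pointwise Kan extension formula offered as an alternative (this is exactly the paper's subsequent remark). The only cosmetic difference is which functor serves as the common adjoint: you show that $F_{!!}$ and $F^{**}$ are both \emph{left} adjoint to $(F_!)^*$, whereas the paper invokes the contravariant $2$-functoriality of $\cwPSh$ to see that both are \emph{right} adjoint to $(F^*)_!$.
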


\begin{proof}
    Consider the adjunction $F_! \dashv F^*$.
    Applying $\cwPSh$, and recalling that it has contravariant $2$-functoriality and thus reverses the adjunction, we get an adjunction $(F^*)_! \dashv F_{!!}$.
    Now, consider the functor $F^*$ itself, to which there is an associated adjunction $(F^*)_! \dashv F^{**}$.
    The uniqueness of adjoints thus implies that $F_{!!} \simeq F^{**}$.
\end{proof}

\begin{remark}
    This can also be seen in more direct (and less coherent) terms using the formula for left Kan extension.
    Given $\varphi \in \cwPSh(\cPSh(\CC))$ and $g \in \cPSh(\DD)$, we have
    \[
        F_{!!}(\varphi)(g)
        \simeq \colim_{F_!(f) \to g} \varphi(f)
        \simeq \colim_{f \to F^*(g)} \varphi(f)
        \simeq \varphi(F^*(g))
        \simeq F^{**}(\varphi)(g).
    \]
\end{remark}

With this in mind, we can prove the Yoneda lemma.

\begin{prop}\label{coyoneda}
    For a small category $\CC$ and $X \in \CC$, there is a natural isomorphism
    \[
        \hom(\cyon(X), -) \simeq X^*
    \]
    of functors $\Fun(\CC, \Spaces) \to \Spaces$.
\end{prop}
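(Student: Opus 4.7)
The plan is to apply $1$-naturality of $\cyon$ and $\cwyon$ to the classifying map $X\colon \pt \to \CC$, and then invoke \cref{!!-**} to trade the left adjoint functoriality for the restriction functoriality. The starting observation is that $\hom(\cyon(X), -) = \cwyon(\cyon(X))$, where I view $\cyon(X)$ as an object of $\cPSh(\CC)$ and use the large coYoneda embedding $\cwyon\colon \cPSh(\CC) \to \cwPSh(\cPSh(\CC))^\op$.

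Applying naturality of $\cyon$ to $X\colon \pt \to \CC$ and using the identification $\cPSh(X) \simeq X_!$ recalled before the lemma, I get $\cyon(X) \simeq X_!(\cyon(\pt))$ in $\cPSh(\CC)$. Applying $\cwyon$ and then naturality of $\cwyon$ to $X_!\colon \cPSh(\pt) \to \cPSh(\CC)$ further identifies
\[
    \cwyon(\cyon(X)) \simeq \cwyon(X_!(\cyon(\pt))) \simeq X_{!!}(\cwyon(\cyon(\pt))).
\]
By \cref{!!-**}, $X_{!!} \simeq X^{**}$, and $X^{**}$ is pre-composition with $X^*$. Combining, I obtain $\cwyon(\cyon(X)) \simeq \cwyon(\cyon(\pt)) \circ X^*$.

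Finally, $\cyon(\pt) \simeq \pt$ under $\cPSh(\pt) \simeq \Spaces$, and $\cwyon(\pt) = \hom_\Spaces(\pt, -)$ is naturally isomorphic to the inclusion $\Spaces \hookrightarrow \SPACES$ because $\pt$ is terminal. Composing this inclusion with $X^*\colon \cPSh(\CC) \to \Spaces$ recovers $X^*$ itself, yielding the desired natural isomorphism $\hom(\cyon(X), -) \simeq X^*$. The main bookkeeping is to verify that each $\simeq$ above is genuinely natural in the argument $F \in \cPSh(\CC)$, keeping track of the opposites built into the directions of $\cyon$ and $\cwyon$; once the identification $\cPSh(X) \simeq X_!$ is granted, the rest is formal. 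An alternative and more direct route, avoiding \cref{!!-**} entirely, is to apply only the first naturality step to obtain $\cyon(X) \simeq X_!(\pt)$ and then use the adjunction $X_! \dashv X^*$ with the terminality of $\pt$ to compute $\hom(X_!(\pt), F) \simeq \hom_\Spaces(\pt, X^*(F)) \simeq F(X)$ naturally in $F$.
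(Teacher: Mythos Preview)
Your proposal is correct and follows essentially the same route as the paper: both arguments apply the $1$-naturality of $\cyon$ and $\cwyon$ to the map $X\colon \pt \to \CC$, invoke \cref{!!-**} to identify $X_{!!}$ with $X^{**}$, and finish by recognizing $\cwyon(\cyon(\pt))$ as the inclusion $\Spaces \hookrightarrow \SPACES$. The paper packages the two naturality steps into a single commutative square, whereas you spell them out sequentially, but the content is the same; your closing alternative via the adjunction $X_! \dashv X^*$ is a pleasant shortcut the paper does not take.
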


\begin{proof}
    By the ($1$-)naturality of the Yoneda embedding, we have the following commutative diagram
    \[\begin{tikzcd}
        \pt & {\cPSh(\pt)^\op} & {\cwPSh(\cPSh(\pt))} \\
        \CC & {\cPSh(\CC)^\op} & {\cwPSh(\cPSh(\CC)).}
        \arrow["\cyon", from=1-1, to=1-2]
        \arrow["X", from=1-1, to=2-1]
        \arrow["\cwyon", from=1-2, to=1-3]
        \arrow["{X_!}", from=1-2, to=2-2]
        \arrow["{X_{!!}}", from=1-3, to=2-3]
        \arrow["\cyon", from=2-1, to=2-2]
        \arrow["\cwyon", from=2-2, to=2-3]
    \end{tikzcd}\]
    Recall that $\cyon(\pt) \in \cPSh(\pt) \simeq \Spaces$ is the point, thus $\cwyon(\cyon(\pt)) \in \cwPSh(\cPSh(\pt)) \simeq \Fun(\Spaces, \SPACES)$ is the inclusion $\iota_\Spaces\colon \Spaces \hookrightarrow \SPACES$.
    By \cref{!!-**}, we have $X_{!!} \simeq X^{**}$.
    Thus, by the commutativity of the diagram we get
    \[
        \cwyon(\cyon(X))
        \simeq X_{!!}(\cwyon(\cyon(\pt)))
        \simeq X_{!!}(\iota_\Spaces)
        \simeq X^{**}(\iota_\Spaces)
        \simeq X^*.
    \]
\end{proof}

We move on to showing the $2$-naturality of the Yoneda lemma.
To that end, we first need to make the evaluation into a $2$-natural transformation.

\begin{defn}
    We define the evaluation $2$-natural transformation $\eval\colon \wiota \circ \iota \Rightarrow \cwPSh \circ \cPSh$ to be the composition
    \begin{align*}
        \wiota(\iota(-))
        &\iso \Fun(\pt, -)\\
        &\too \Fun(\cwPSh(\cPSh(\pt)), \cwPSh(\cPSh(-)))\\
        &\too \Fun(\pt, \cwPSh(\cPSh(-)))\\
        &\iso \cwPSh(\cPSh(-)),
    \end{align*}
    where the second map is applying the copresheaves functor twice and the third is by pre-composing with the double coYoneda embedding of the point $\cwyon \circ \cyon\colon \pt \to \cwPSh(\cPSh(\pt))$.
\end{defn}

We now extend the proof of \cref{coyoneda} to include the $2$-naturality.

\begin{prop}\label{yon-lem-2}
    There is an isomorphism
    \[
        \cwyon \circ \cyon \simeq \eval
    \]
    of $2$-natural transformations $\wiota \circ \iota \Too \cwPSh \circ \cPSh$.
\end{prop}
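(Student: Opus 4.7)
The plan is to upgrade the pointwise isomorphism established in \cref{coyoneda} to an isomorphism of $2$-natural transformations, leveraging the $2$-naturality of the coYoneda embeddings from \cref{yon-is-2}. The uniqueness statement \cref{nat-unique} offers a cleaner alternative, which I would also present.

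For the direct approach, I would first realize $\cwyon \circ \cyon$ itself as a $2$-natural transformation $\wiota \circ \iota \Rightarrow \cwPSh \circ \cPSh$. By \cref{yon-is-2} applied in both the small and the large settings, the small coYoneda $\cyon\colon \iota \Too \cPSh(-)^\op$ and the large coYoneda $\cwyon\colon \wiota \Too \cwPSh(-)^\op$ are $2$-natural transformations. Composing them---whiskering $\cwyon$ along $\cPSh$ and carefully tracking the opposites arising from the contravariant $2$-functoriality of $\cPSh$ and $\cwPSh$---produces the desired $2$-natural transformation.

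Next, I would match this with $\eval$. By construction, $\eval(\CC)(X)$ equals $X^{**}(\iota_\Spaces)$, and the proof of \cref{coyoneda} already identifies this with $\cwyon(\cyon(X))$ by combining the $1$-naturality squares of $\cyon$ and $\cwyon$ applied to $X\colon \pt \to \CC$, \cref{!!-**}, and the identification $\cwyon(\cyon(\pt)) \simeq \iota_\Spaces$. The $2$-naturality of $\cyon$ and $\cwyon$ means that these squares depend coherently on the $1$-morphism $X$ (encoded by how they vary under $2$-morphisms in $\tCat$) and on the category $\CC$. Consequently, the chain of isomorphisms from the proof of \cref{coyoneda} assembles into an isomorphism $\cwyon \circ \cyon \simeq \eval$ of $2$-natural transformations.

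Alternatively, I would invoke \cref{nat-unique}: both $\cwyon \circ \cyon$ and $\eval$ are $2$-natural transformations $\wiota \circ \iota \Rightarrow \cwPSh \circ \cPSh$ whose value at $\pt \in \tCat$ is the functor $\pt \to \cwPSh(\cPSh(\pt)) \simeq \Fun(\Spaces, \SPACES)$ picking out $\iota_\Spaces$---for $\eval$ this holds by construction, and for $\cwyon \circ \cyon$ via the identification $\cwyon(\cyon(\pt)) \simeq \iota_\Spaces$ used in the proof of \cref{coyoneda}. The uniqueness then forces them to be isomorphic. The main obstacle in the direct approach is the careful bookkeeping of opposites needed to present $\cwyon \circ \cyon$ as a bona fide $2$-natural transformation, since the coYoneda embedding has contravariant $2$-functoriality; the uniqueness route elegantly sidesteps almost all of this.
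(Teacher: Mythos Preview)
Your alternative route via \cref{nat-unique} is correct and is exactly the second proof the paper alludes to.

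Your direct approach, however, is not yet a proof. The sentence ``the chain of isomorphisms from the proof of \cref{coyoneda} assembles into an isomorphism of $2$-natural transformations'' is the entire content of the proposition, and you have only asserted it. A minor point first: by the paper's definition of $\eval$, one has $\eval_\CC(X) = X_{!!}(\iota_\Spaces)$, not $X^{**}(\iota_\Spaces)$; the functoriality of $\cwPSh\circ\cPSh$ is the $(-)_{!!}$ one. So \cref{!!-**} is not needed here, and dragging it into the chain only makes the coherence claim harder to justify. More substantively, saying that the naturality squares ``depend coherently on $X$ and $\CC$'' because $\cyon$ and $\cwyon$ are $2$-natural is not an argument---it is a restatement of what must be shown.

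The paper's proof supplies precisely the missing step, and it does so without passing through the pointwise computation of \cref{coyoneda} at all. The $2$-naturality of the composite $\alpha := \cwyon\circ\cyon$ says that for any fixed object $A$ of $\tCat$, the two maps
\[
  \Fun(\wiota\iota(A),\wiota\iota(-)) \to \Fun(\wiota\iota(A),\cwPSh\cPSh(-))
  \quad\text{and}\quad
  \Fun(\cwPSh\cPSh(A),\cwPSh\cPSh(-)) \to \Fun(\wiota\iota(A),\cwPSh\cPSh(-))
\]
given by post-composition with $\alpha$ and pre-composition with $\alpha_A$, respectively, fit into a commuting square of $2$-functors $\tCat\to\tCAAT$. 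Taking $A=\pt$ and identifying $\Fun(\pt,-)$ with the identity, the upper path is $\cwyon\circ\cyon$ itself, while the lower path is, by definition, $\eval$. Thus the identification is already a commuting square of $2$-natural transformations; no separate ``assembly'' is required.
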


\begin{proof}
    The small and large versions of the $2$-naturality of the Yoneda embedding from \cref{yon-is-2} compose as follows
    \[\begin{tikzcd}
        \tCat && \tCAT && {\tCAAT.}
        \arrow[""{name=0, anchor=center, inner sep=0}, "\iota", curve={height=-12pt}, from=1-1, to=1-3]
        \arrow[""{name=1, anchor=center, inner sep=0}, "{\cPSh(-)^\op}"', curve={height=12pt}, from=1-1, to=1-3]
        \arrow[""{name=2, anchor=center, inner sep=0}, "\wiota", curve={height=-12pt}, from=1-3, to=1-5]
        \arrow[""{name=3, anchor=center, inner sep=0}, "{\cwPSh(-)^\op}"', curve={height=12pt}, from=1-3, to=1-5]
        \arrow["\cyon", shorten <=3pt, shorten >=3pt, Rightarrow, from=0, to=1]
        \arrow["\cwyon", shorten <=3pt, shorten >=3pt, Rightarrow, from=2, to=3]
    \end{tikzcd}\]
    The $2$-naturality of the composition $\cwyon \circ \cyon\colon \wiota \circ \iota \Rightarrow \cwPSh \circ \cPSh$ gives the commutativity of the square
    \[\begin{tikzcd}
        & {\Fun(\wiota(\iota(\pt)),\wiota(\iota(-)))} \\
        {\Fun(\pt,-)} && {\Fun(\wiota(\iota(\pt)),\cwPSh(\cPSh(-)))} \\
        & {\Fun(\cwPSh(\cPSh(\pt)),\cwPSh(\cPSh(-)))}
        \arrow[from=1-2, to=2-3]
        \arrow[from=2-1, to=1-2]
        \arrow[from=2-1, to=3-2]
        \arrow[from=3-2, to=2-3]
    \end{tikzcd}\]
    of $2$-functors from $\tCat$ to $\tCAAT$.
    Observe that the $2$-functor on the left is equivalent to $\wiota(\iota(-))$, and the $2$-functor on the right is equivalent to $\cwPSh(\cPSh(-))$.
    Moreover, by construction, the upper composition is $\cwyon \circ \cyon$ while the lower composition is $\eval$, concluding the proof.
\end{proof}

\section{Uniqueness}

We conclude by establishing the uniqueness of the $2$-natural Yoneda embedding, and the alternative proof for the identification of the double coYoneda embedding with the evaluation $2$-natural transformation.

\begin{defn}
    Given two $2$-functors $F, G\colon \tA \to \tB$, we denote by $\Natt(F, G)$ the category of $2$-natural transformations between them, defined to be the category of morphisms between them in the $2$-category of $2$-functors $\tFunt(\tA, \tB)$.
\end{defn}

Note that the following proposition is stated for $\tCAT$ and $\iota$, but the same holds with $\tCAAT$ and $\wiota \circ \iota$ in their place.

\begin{prop}\label{nat-unique}
    Let $F\colon \tCat \to \tCAT$ be a $2$-functor, then there is an equivalence of categories
    \[
        \Natt(\iota, F) \iso F(\pt)
        \qin \tCAT.
    \]
\end{prop}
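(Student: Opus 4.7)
The plan is to recognize \cref{nat-unique} as a direct application of the $2$-categorical Yoneda lemma established by Hinich \cite{Hin}. The essential step is to identify the inclusion $\iota\colon \tCat \to \tCAT$ with the $2$-functor corepresented by the trivial category $\pt \in \tCat$. Concretely, for every small category $\CC$ the hom category $\hom_{\tCat}(\pt, \CC)$ is canonically equivalent to $\CC$, since a functor $\pt \to \CC$ is an object of $\CC$ and a natural transformation between two such is a morphism in $\CC$. These equivalences assemble into a $2$-natural equivalence of $2$-functors $\tCat \to \tCAT$ between $\iota$ and the corepresented functor (viewed in $\tCAT$ via the size enlargement).

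Given this identification, the statement reduces to the $2$-Yoneda lemma: for a $2$-category $\tA$, an object $A \in \tA$, and a $2$-functor $F\colon \tA \to \tCAT$, evaluation at the identity at $A$ induces an equivalence
\[
    \Natt(\hom_\tA(A, -), F) \iso F(A)
\]
in $\tCAT$. Applying this with $\tA = \tCat$, $A = \pt$, and the given $F$ yields the desired equivalence. The parallel statement noted just before the proposition, with $\tCAAT$ and $\wiota \circ \iota$ in place of $\tCAT$ and $\iota$, follows from exactly the same argument, carried out one universe level higher.

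The main thing to be careful about is the size management: Hinich's Yoneda lemma must be applied at the appropriate size level so that the target $\tCAT$ accommodates the values of $F$ on small categories. This amounts to base-changing the $\Cat$-enrichment of $\tCat$ to a $\CAT$-enrichment without altering the hom categories (beyond the enlargement functor), and checking that the statement of Yoneda in \cite{Hin} applies in this enriched context. Once these formalities are in place, the identification of $\iota$ as a corepresented $2$-functor is straightforward and the proposition follows immediately; no further calculation is required.
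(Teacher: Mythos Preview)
Your proposal is correct and follows essentially the same approach as the paper: identify $\iota$ with the $2$-functor corepresented by $\pt \in \tCat$ and then apply Hinich's $2$-categorical Yoneda lemma with $\tA = \tCat$ and $X = \pt$. The paper's proof is terser about the size issues you raise, but the argument is the same.
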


\begin{proof}
    Recall from \cite[6.2.7]{Hin} (see also \cite[Theorem 2.10(1)]{EnrNat} for a reformulation closer to our context) that the $2$-Yoneda lemma for large categories says that for a large $2$-category $\tA$, an object $X \in \tA$ and a $2$-functor $F\colon \tA \to \tCAT$, evaluation at $\id[X]$ induces an equivalence
    \[
        \Natt(\hom(X, -), F) \iso F(X)
        \qin \tCAT.
    \]
    The claim then follows by taking $\tA = \tCat$ and noting that $\iota$ is corepresented by $\pt \in \tCat$.
\end{proof}

\begin{remark}
    This result can also be seen as a consequence of the fact that the $2$-category of categories is freely generated under oplax colimits from the trivial category $\pt$.
    We do not give a complete proof, but sketch the main idea.
    Let $\alpha\colon \iota \Rightarrow F$ be a $2$-natural transformation.
    Any $\CC \in \tCat$ can be written as the constant oplax colimit $\CC \simeq \oplaxcolim_\CC(\pt)$.
    Using the assembly map for oplax colimits, and the fact that $\iota$ commutes with oplax colimits, we get the following commutative diagram:
    \[\begin{tikzcd}
        {\oplaxcolim_\CC(\iota(\pt))} &&& {\oplaxcolim_\CC(F(\pt))} \\
        {\iota(\oplaxcolim_\CC(\pt))} &&& {F(\oplaxcolim_\CC(\pt))} \\
        {\iota(\CC)} &&& {F(\CC)}
        \arrow["{\alpha_{\oplaxcolim_\CC(\pt)}}", from=2-1, to=2-4]
        \arrow["\wr"', from=1-1, to=2-1]
        \arrow[from=1-4, to=2-4]
        \arrow["{\oplaxcolim_\CC(\alpha_{\pt})}", from=1-1, to=1-4]
        \arrow["\wr"', from=2-1, to=3-1]
        \arrow["{\alpha_{\CC}}", from=3-1, to=3-4]
        \arrow["\wr", from=2-4, to=3-4]
    \end{tikzcd}\]
    This expresses $\alpha_\CC$ as the composition of $\oplaxcolim_\CC(\alpha_\pt)$ and the assembly map of $F$, showing that $\alpha$ is determined by $\alpha_\pt$.
\end{remark}

  \ifdm
  \begin{ack}
    
  \end{ack}
  \fi
	
  \ifarxiv
	\bibliographystyle{alpha}
  \else
  \bibliographystyle{emss}
  \fi
	\bibliography{refs}

\end{document}